\documentclass[reqno,twocolumn]{article}

\usepackage{graphicx}
\usepackage{amsmath,amssymb,amsfonts}
\usepackage{amsthm}
\usepackage{xcolor}
\usepackage{tikz}
\usepackage{tkz-fct}
\usepackage{mathtools}
\usepackage{fullpage}
\usepackage{hyperref}

\newtheorem{theorem}{Theorem}

\newtheorem{question}{Question}
\newtheorem{problem}{Problem}

\theoremstyle{definition}
\newtheorem{example}{Example}

\DeclareMathOperator{\sign}{sgn}

\usepackage{etoolbox}
\makeatletter
\patchcmd{\@maketitle}
  {\LARGE \@title}
  {\large \bfseries \MakeUppercase{\@title}}
  {}{}
\makeatother

\title{Error analysis of quantization combined with Hadamard transforms}

\author{
Matvei Kotov\\
\small V-Nova, London, UK\\
\small \texttt{matvei.kotov@v-nova.com}
\and
Lorenzo Ciccarelli\\
\small V-Nova, London, UK\\
\small \texttt{lorenzo.ciccarelli@v-nova.com}
}

\date{} % optional: remove date

\begin{document}

\maketitle

{\small
\noindent\textbf{Abstract:}
In this paper, we consider an image coding process consisting of the following four steps: a direct transformation, a direct quantization, an inverse quantization, and an inverse transformation, where Hadamard transforms are used for the transformation steps and a dead-zone quantizer is used for the quantization. The aim of this paper is to provide a theoretical tool for analyzing this process. We discuss error bounds for this process and bounds on the largest absolute value that the components of the result can attain. In order to obtain these bounds, we use methods of linear algebra and properties of Hadamard matrices. The obtained formulae depend on the size of the matrices, the parameters of the quantizer and the dequantizer, and a bound on the source values. Knowing the error bounds helps control the trade-off between compression efficiency and output quality. Knowing the bounds on the largest absolute value helps decide how many bits are needed to store the result. In addition, we demonstrate a connection between the norm~$\|\mathbf{H}\|_{\infty, 1}$ of a Hadamard matrix~$\mathbf{H}$ and the maximal excess~$\sigma([\mathbf{H}])$ of the equivalence class containing~$\mathbf{H}$.

\smallskip

\noindent\textbf{Keywords:} Hadamard matrix, maximal excess, quantization

\smallskip

\noindent\textbf{MSC (2020):} 68U10, 15B34
}

\section{Introduction}
 In image and video compression, some lossy compression algorithms apply the following sequence of steps on a block-by-block basis: 
\begin{equation}\label{pipeline}
  \mathbf{x}' = \mathbf{IT}(\mathbf{IQ}(\mathbf{DQ}(\mathbf{DT}(\mathbf{x})))),
\end{equation}
where $\mathbf{x}$ is a block of the image written as a vector, $\mathbf{DT}$ is a direct transformation, $\mathbf{IT}$ is an inverse transformation, $\mathbf{DQ}$ is a direct quantization, and $\mathbf{IQ}$ is an inverse quantization. The transformation step is usually lossless, but is used to reorganize the signal for better quantization. The transform can be a discrete cosine transform, a discrete wavelet transform, a Hadamard transform, etc. 

The use of Hadamard matrices in image coding was first proposed by Pratt et al.~\cite{Pratt1969}. This approach was also studied by Kitajima et al.~\cite{KitajimaShimonoKurobe1980}. Philips and Denecker~\cite{PhilipsDenecker1997} considered a lossless version of the Hadamard transform. Hadamard transforms can be chosen because they can be computed fast using only additions, subtractions, and right shifts~\cite{Salomon2007}. 

Hadamard matrices are also used in video compression. For example, 
the Low Complexity Enhancement Video Coding standard, formally known as MPEG-5 Part 2 LCEVC (ISO/IEC 23094-2) and approved by ISO/IEC JTC 1/SC 29/WG04 (MPEG), employs Hadamard matrices of size~4 and 16~\cite{ISO230942,BattistaMeardiFerraraCiccarelliMaurerContiOrcioni2022,MeardiFerraraCiccarelliCobianchiPoularakisMaurerBattistaByagowi2020}.

Various quantization schemes are used in practice. For example, a mid-tread uniform quantizer is defined as
\begin{gather*}
DQ(x) = \left\lfloor\frac{x}\Delta + \tfrac12\right\rfloor\!,
\\
IQ(x) = \Delta x. 
\end{gather*}
A mid-riser uniform quantizer is defined as 
\begin{gather*}
DQ(x) =  \left\lfloor\frac{x}\Delta\right\rfloor\!,  \\ IQ(x) = \Delta\! \left(x + \tfrac{1}{2}\right)\!. 
\end{gather*}

Also, different rounding methods are used. For example, rounding towards zero can be used, i.e. negative numbers are rounded up and positive numbers are rounded down. For example, in this case, a mid-rised uniform quantizer is defined as
\begin{gather*}
DQ(x) =  \sign(x)\!\left\lfloor\frac{|x|}\Delta\right\rfloor\!, \\
 IQ(x) = \sign(x)  \Delta\!\left(|x| + \tfrac{1}{2}\right)\!. 
\end{gather*}

For more efficient compression, a dead-zone quantizer can be used: 
\begin{gather*}
DQ(x) = \sign(x)\max\!\left(0, \left\lfloor \frac{|x| - \delta}\Delta \right\rfloor +1\right)\!,
\\
IQ(x) = \sign(x)\!\left(\Delta\!\left(|x| -\tfrac{1}{2}\right) + \delta \right). 
\end{gather*}

Due to losslessness, the result $\mathbf{x}'$ in the sequence~(\ref{pipeline}) can be different from the source block $\mathbf{x}$. It is important to know bounds on the following two characteristics of this process: 

\begin{enumerate}
    \item the error made during the computation,
    \item the largest absolute value that the components of $\mathbf{x}'$ can attain.  
\end{enumerate}

Knowing the error bound helps control the trade-off between compression efficiency and output quality. Knowing the bounds on the largest absolute value helps decide how many bits are required to store $\mathbf{x}'$ to avoid overflow or to decide that clamping should be applied. 

Let $\mathbf{x} \in \mathbb{Z}^{n}$. In this paper, we consider the direct transformation and the inverse transformation steps based on Hadamard matrices. Since the decoder side should have as few operations as possible because it runs multiple times, the division by $n$ is performed during the direct transformation step:
$$
\mathbf{DT}(\mathbf{x}) = {\textstyle\frac{1}{n}} \mathbf{Hx}
\quad 
\text{and}
\quad
\mathbf{IT}(\mathbf{x}) = \mathbf{H}^{T}\mathbf{x},
$$
where $\mathbf{H}$ is a Hadamard matrix of size $n \times n$, and $\mathbf{x}$ is a block of $n$ pixels written as a vector. We will focus on Hadamard matrices generated by Sylvester's construction, but some of our results are true for any Hadamard matrix.

To make our result general enough, we use the following formulae for the direct quantization $\mathbf{DQ}(\mathbf{x}) = (DQ_1(x_1), \ldots, DQ_n(x_n))$ and the inverse quantization $\mathbf{IQ}(\mathbf{x}) = (IQ_1(x_1), \ldots, \allowbreak IQ_n(x_n))$:
\begin{gather*}
  DQ_i(x) = \sign(x) \!\left\lfloor \frac{\max(0, |x| + \delta_i)}{\Delta_i}\right\rfloor\!,
\\
  IQ_i(x) = \sign(x)(\Gamma_i |x| + \gamma_i),
\end{gather*}
where $\delta_i$ and $\gamma_i$ are integers, $\Delta_i$ and $\Gamma_i$ are positive integers, and $1 \leq i \leq n$.
Note that we allow $\Delta_i$ to be different from $\Gamma_i$.

\begin{example}\label{IntroExample1}
  Let $\delta_i=\gamma_i=0$ and $\Delta_i=\Gamma_i=1000$.
  Therefore, $DQ_i(x) = \sign(x)\!\left\lfloor \frac{|x|}{1000} \right\rfloor$ and $IQ_i(x) = 1000 x$.
The vector $\mathbf{x}$ is defined below.
  Let $\mathbf{t}_1 = \mathbf{DT}(\mathbf{x})$, 
$\mathbf{t}_2 = \mathbf{DQ}(\mathbf{t}_1)$, and 
$\mathbf{t}_3 = \mathbf{IQ}(\mathbf{t}_2)$.
Then the vectors $\mathbf{x}$, $\mathbf{t}_1$, $\mathbf{t}_2$, $\mathbf{t}_3$, and $\mathbf{x}'$ are equal to 
  $$
    \begin{pmatrix}
      4016 \\4000 \\4000 \\4000 \\4000 \\-4000 \\4000 \\-4000 \\4000\\ 4000\\ -4000\\ -4000\\ 4000\\ -4000\\ -4000 \\4000
    \end{pmatrix}\!,
    \begin{pmatrix}
      1001 \\ 1001 \\ 1001 \\ -999 \\ 1001 \\ 1001 \\ 1001 \\ -999 \\ 1001 \\ 1001 \\ 1001 \\ -999 \\ -999 \\ -999 \\ -999 \\ 1001 
    \end{pmatrix}\!,
    \begin{pmatrix}
      1 \\ 1 \\1 \\0 \\1 \\1 \\1 \\0 \\1 \\1 \\1 \\0 \\0\\ 0\\ 0\\ 1
    \end{pmatrix}\!,
   \begin{pmatrix}
      1000 \\ 1000 \\ 1000 \\0 \\1000 \\1000 \\1000 \\0 \\1000 \\1000 \\1000 \\0 \\0\\ 0\\ 0\\ 1000
    \end{pmatrix}\!,\text{ and }
    \!\begin{pmatrix}
      10000 \\2000\\ 2000\\ -2000\\ 2000\\ 2000\\ 2000\\ -2000\\ 2000\\ 2000\\ 2000\\ -2000\\ -2000\\ -2000\\ -2000\\ 2000
    \end{pmatrix}\!,$$ respectively.
  
     Therefore, we can see that the first component became almost 2.5 times larger, and the error is almost 6 stepwidths.
\end{example}

As we mentioned earlier, it is important to know bounds on the error made during the computation and on the largest absolute value that the components of $\mathbf{x}'$ can attain. To formalize these characteristics, we will use the vector norm $\|\mathbf{x}\|_\infty = \max(|x_1|, \ldots, |x_n|)$. This brings us to the following mathematical problems.
\begin{problem}\label{Problem1}
  Find a function $f$ such that $\|\mathbf{x}' - \mathbf{x}\|_\infty \leq  f(\|\mathbf{x}\|_\infty, n, \boldsymbol{\Delta}, \boldsymbol{\Gamma}, \boldsymbol{\delta}, \boldsymbol{\gamma})$.
\end{problem}

\begin{problem}\label{Problem2}
  Find a function $g$ such that $\|\mathbf{x}'\|_\infty \leq g(\|\mathbf{x}\|_\infty, n, \boldsymbol{\Delta}, \boldsymbol{\Gamma}, \boldsymbol{\delta}, \boldsymbol{\gamma})$.
\end{problem}

Let us demonstrate how these bounds can be used.
If each component of $\mathbf{x}$ is stored using $k$ bits and these values are signed, i.e., $ -2^{k-1} \leq x_{i} \leq 2^{k-1} - 1$, then we can write $\|\mathbf{x}\|_\infty \leq 2^{k-1}$.
For example, if $k = 8$, then $\|\mathbf{x}\|_\infty \leq 128$; if $k = 16$, then $\|\mathbf{x}\|_\infty \leq 32768$, etc.
If, while solving Problem~\ref{Problem2}, we obtained an inequality $\|\mathbf{x}'\|_\infty \leq C  \|\mathbf{x}\|_\infty$, where $C$ is a constant, then it means
$$
-C  2^{k-1} \leq x'_i \leq C 2^{k-1}.
$$
To find the number of bits required to store $x'_i$, we need to find an integer $K$ such that
$$
-2^{K -1} \leq -C  2^{k-1} \leq x'_i \leq C  2^{k-1} \leq 2^{K - 1} - 1.
$$
Solving the inequalities, we obtain
\begin{equation}\label{FormInro}
  K = \lceil \log_2(C 2^{k - 1} + 1)\rceil + 1.
\end{equation}
For example, suppose that we have $\|\mathbf{x}'\|_\infty \leq 1.5 \|\mathbf{x}\|_\infty$ and $\|\mathbf{x}\|_\infty \leq 32768$.
Hence, for each $i$, $-49152 \leq x'_{i} \leq 49152$.
From Formula~(\ref{FormInro}), we obtain $K = 17$.
Thus, we must either use $17$ bits for each component or clamp the components of $\mathbf{x}'$ to be able to store them using $16$ bits.

In this paper, we solve Problem~\ref{Problem1} and Problem~\ref{Problem2}.
To solve the problems, we use methods of linear algebra and properties of Hadamard matrices.
Our solution to Problem~\ref{Problem1} is given in Theorem~\ref{Theorem1} and our solution to Problem~\ref{Problem2} is given in Theorem~\ref{Theorem2}.

Note that error bounds for the fast Fourier transform have been widely studied; see, for example,~\cite{Brisebarre2018,TascheZeuner2001,Ramos1971}.
The theory of quantization noise in digital signal processing is considered in~\cite{WidrowKollar2008}.

The remainder of this paper is structured into five parts.
The next section recalls the definitions of vector and matrix norms and some results about these norms and about Hadamard matrices.
Section~\ref{ErrBoundSection} is devoted to $\|\mathbf{x} - \mathbf{x}'\|_\infty$: it contains some examples to demonstrate the effect of quantization and rounding and shows how methods of linear algebra can be applied to obtain error bounds.
Additionally, we discuss asymptotic properties of these bounds and the relative error.
In Section~\ref{LargestSection}, we consider bounds on the largest value of~$\|\mathbf{x}'\|_\infty$. It turned out that the matrix norm $\|\cdot\|_{\infty, 1}$ is very useful in solving Problem~\ref{Problem2}. In Section~\ref{ConnectionSection}, we demonstrate the connection between the norm $\|\mathbf{H}\|_{\infty, 1}$ of a Hadamard matrix~$\mathbf{H}$ and the maximal excess $\sigma([\mathbf{H}])$ of the equivalence class containing $\mathbf{H}$.
The conclusion summarizes our findings and also contains some open questions.

\section{Preliminaries}

In this section, we recall the definitions of vector and matrix norms and the definition of Hadamard matrices.
Also, we discuss some properties of Hadamard matrices.

A \textit{Hadamard matrix} is a square matrix whose entries are either $+1$ or $-1$ and whose rows are mutually orthogonal.
Let $\mathcal{H}_n$ denote the set of all Hadamard matrices of order $n$.

If $\mathbf{H} \in\mathcal{H}_n $, then
\begin{equation}\label{HadamardProperty}
  \mathbf{H}^T \mathbf{H} = n\mathbf{I}_n,
\end{equation}
where $\mathbf{I}_n$ is the $n \times n$ identity matrix.

If $\mathbf{H}$ is a Hadamard matrix, then it can be proved that its order must be $1$, $2$, or a multiple of $4$.
The existence of a Hadamard matrix of order $4k$ for every positive integer $k$ remains an open question, known as the Hadamard conjecture.

The Kronecker product provides a way to construct Hadamard matrices.
If $\mathbf{H}_1 \in \mathcal{H}_m$ and $\mathbf{H}_2 \in \mathcal{H}_n$, then $\mathbf{H}_1 \otimes \mathbf{H}_2 \in \mathcal{H}_{mn}$. 
Sylvester's construction provides a way to construct Hadamard matrices of order $2^k$:
\begin{equation}\label{Sylvester}
\mathbf{H}_1 = \!\begin{pmatrix}1\end{pmatrix}\!, \mathbf{H}_2 = \!\begin{pmatrix}1 & 1 \\ 1 & - 1\end{pmatrix}\!, 
  \mathbf{H}_{2^k} = \mathbf{H}_2 \otimes \mathbf{H}_{2^{k-1}}.
\end{equation}

We refer the reader to~\cite{Horadam2007} for more information on Hadamard matrices and their applications in image compression.

Let $\mathbf{x} \in \mathbb{R}^n$.
We will use the following standard notation from linear algebra:
\begin{gather}
  \nonumber \|\mathbf{x}\|_1 = |x_1| + \ldots + |x_n|,
\\
\label{VecNorm2}
\|\mathbf{x}\|_2 = \sqrt{x_1^2 + \ldots + x_n^2 }, 
\\
\nonumber  \|\mathbf{x}\|_\infty = \max(|x_1|, \ldots, |x_n|).
\end{gather}

We recall the following well-known inequalities:
\begin{equation}\label{Norm2vsNorm1}
  \|\mathbf{x}\|_\infty \leq \|\mathbf{x}\|_2 \leq \|\mathbf{x}\|_1 \leq \sqrt{n} \|\mathbf{x}\|_2 \leq n \|\mathbf{x}\|_\infty.
\end{equation}

%For any norm, we, of course, have
%$$
%  \|\mathbf{x} + \mathbf{y}\|_p \leq \|\mathbf{x}\|_p + \|\mathbf{y}\|_p \quad \text{and}\quad \|a %\mathbf{x}\|_p = |a|\|\mathbf{x}\|_p.
%$$

Let $\|\cdot \|_p$ and $\|\cdot \|_q$ be vector norms.
The matrix norm induced by these norms is defined as
$$
  \|\mathbf{A}\|_{p, q} = \sup_{\mathbf{x} \neq 0} \frac{\|\mathbf{A}\mathbf{x}\|_q}{\|\mathbf{x}\|_p} .
$$
If the vector norms are the same, we write $\|\mathbf{A}\|_{p}$ instead of $\|\mathbf{A}\|_{p, p}$.
Let $\mathbf{A}$ and $\mathbf{B}$ be matrices of size $n \times n$.
The following inequality hold:
\begin{gather}
\label{NormIneq}
  \|\mathbf{A}\mathbf{x}\|_q \leq \|\mathbf{A}\|_{p, q}  \|\mathbf{x}\|_p,
\shortintertext{It is known that} 
\label{Norm1}
  \|\mathbf{A}\|_1 = \max_{1 \leq j \leq n}{\sum_{i = 1}^n{|a_{ij}|}},
\\
\label{NormInfty}
  \|\mathbf{A}\|_\infty = \max_{1 \leq i \leq n}{\sum_{j = 1}^n{|a_{ij}|}},
\\
\label{Norm2}
  \|\mathbf{A}\|_2 = \sqrt{\lambda_{\max}(\mathbf{A}^T\mathbf{A})},
\end{gather}
where $\lambda_{\max}(\mathbf{A}^T\mathbf{A})$ is the largest eigenvalue of the matrix $\mathbf{A}^T\mathbf{A}$.

\begin{example}\label{Ex1}
  Consider a Hadamard matrix $\mathbf{H}$ of order $n$.
  Since all the elements of $\mathbf{H}$ are either $+1$ or $-1$, using~(\ref{Norm1}) and~(\ref{NormInfty}), it is easy to see that 
  $$
    \|\mathbf{H}\|_1 = \|\mathbf{H}\|_\infty = \|\mathbf{H}^T\|_1 = \|\mathbf{H}^T\|_\infty = n.
  $$
  Also, using~(\ref{HadamardProperty}) and~(\ref{Norm2}), we obtain 
  \begin{equation}\label{HnNorm2}
    \|\mathbf{H}\|_2 = \sqrt{n}.
  \end{equation}
\end{example}

It was proved in~\cite{Rohn2000} that 
\begin{gather}
\label{NormInfty1}
  \|\mathbf{A}\|_{\infty, 1} = \max\{\|\mathbf{A}\mathbf{x}\|_1 \mid \mathbf{x} \in \{-1, 1\}^n\},
\\
\nonumber
\|\mathbf{A}\|_{1, \infty} = \max_{i, j}|a_{ij}|.
\end{gather}

\begin{example}\label{Ex2}
  Consider a Hadamard matrix $\mathbf{H}$ of order $n$. 
  Using~(\ref{Norm2vsNorm1}), (\ref{NormIneq}), and (\ref{HnNorm2}), we obtain the following chain of inequalities:
  \begin{multline*}
    \|\mathbf{H} \mathbf{x}\|_1 \leq \sqrt{n} \|\mathbf{H} \mathbf{x}\|_2 \leq
    \sqrt{n}   \|\mathbf{H}\|_2  \|\mathbf{x}\|_2 =
     n \|\mathbf{x}\|_2.
  \end{multline*}
  Therefore, from (\ref{VecNorm2}) and (\ref{NormInfty1}), it follows that
  \begin{multline}\label{Formula2}
    \|\mathbf{H}\|_{\infty, 1} =
    \max\{\|\mathbf{H}\mathbf{x}\|_1 \mid \mathbf{x} \in \{-1, 1\}^n\} \\ \leq 
    \max\{n \|\mathbf{x}\|_2 \mid \mathbf{x} \in \{-1, 1\}^n\} \\= 
    n  \sqrt{1 + \ldots + 1} = n^{\frac32}.
  \end{multline}
\end{example}

\begin{example}\label{Ex3}
  Consider Sylvester's construction~(\ref{Sylvester}).
  We have the following chain of equalities:
  \begin{multline*}
    \left\|\mathbf{H}_{2^{k+2}}\!\begin{pmatrix}\mathbf{x} \\ \mathbf{x} \\ -\mathbf{x} \\ \mathbf{x}\end{pmatrix}\!\right\|_1 
    \\=
    \left\|\!
    \begin{pmatrix}
      \mathbf{H}_{2^k} & \mathbf{H}_{2^k} & \mathbf{H}_{2^k} & \mathbf{H}_{2^k} \\
      \mathbf{H}_{2^k} & -\mathbf{H}_{2^k} & \mathbf{H}_{2^k} & -\mathbf{H}_{2^k} \\
      \mathbf{H}_{2^k} & \mathbf{H}_{2^k} & -\mathbf{H}_{2^k} & -\mathbf{H}_{2^k} \\
      \mathbf{H}_{2^k} & -\mathbf{H}_{2^k} & -\mathbf{H}_{2^k} & \mathbf{H}_{2^k}
    \end{pmatrix}\!
    \!\begin{pmatrix}
      \mathbf{x} \\ 
      \mathbf{x} \\ 
      -\mathbf{x} \\ 
      \mathbf{x}
    \end{pmatrix}\!
    \right\|_1
    \\=
    \left\|\!\begin{pmatrix}
      \mathbf{H}_{2^k}\mathbf{x} + \mathbf{H}_{2^k}\mathbf{x} -\mathbf{H}_{2^k}\mathbf{x} + \mathbf{H}_{2^k}\mathbf{x} \\
      \mathbf{H}_{2^k}\mathbf{x} - \mathbf{H}_{2^k}\mathbf{x} - \mathbf{H}_{2^k}\mathbf{x} - \mathbf{H}_{2^k}\mathbf{x} \\
      \mathbf{H}_{2^k}\mathbf{x} + \mathbf{H}_{2^k}\mathbf{x} + \mathbf{H}_{2^k}\mathbf{x} - \mathbf{H}_{2^k}\mathbf{x} \\
      \mathbf{H}_{2^k}\mathbf{x} - \mathbf{H}_{2^k}\mathbf{x} + \mathbf{H}_{2^k}\mathbf{x} + \mathbf{H}_{2^k}\mathbf{x}
    \end{pmatrix}\!\right\|_1
    \\= 
    \left\|\!\begin{pmatrix}
      2\mathbf{H}_{2^k}\mathbf{x} \\
      -2\mathbf{H}_{2^k}\mathbf{x} \\
      2\mathbf{H}_{2^k}\mathbf{x} \\
      2\mathbf{H}_{2^k}\mathbf{x}
    \end{pmatrix}\!\right\|_1
%    \\=
%    \|2\mathbf{H}_{2^k}\mathbf{x}\|_1 +
%    \|{-2}\mathbf{H}_{2^k}\mathbf{x}\|_1 +
%    \|2\mathbf{H}_{2^k}\mathbf{x}\|_1 +
%    \|2\mathbf{H}_{2^k}\mathbf{x}\|_1
    \\= 
    2\|\mathbf{H}_{2^k}\mathbf{x}\|_1 +
    2\|\mathbf{H}_{2^k}\mathbf{x}\|_1 +
    2\|\mathbf{H}_{2^k}\mathbf{x}\|_1 +
    2\|\mathbf{H}_{2^k}\mathbf{x}\|_1
    \\= 
    8\|\mathbf{H}_{2^k}\mathbf{x}\|_1.
  \end{multline*}

  Therefore,
  \begin{multline}\label{Formula1}
    \|\mathbf{H}_{2^{k+2}}\|_{\infty, 1} \\= 
    \max\left\{\|\mathbf{H}_{2^{k+2}}\mathbf{x}\|_1 \mid \mathbf{x} \in \{-1, 1\}^{2^{k+2}}\right\}\\ \geq 
    \max\left\{\left\|\mathbf{H}_{2^{k+2}}
    \!\begin{pmatrix}
      \mathbf{x} \\ 
      \mathbf{x} \\ 
      -\mathbf{x} \\ 
      \mathbf{x}
    \end{pmatrix}\!
    \right\|_1  : \mathbf{x} \in \{-1, 1\}^{2^{k}}\right\} 
  \\  = 
    \max\left\{ 8\|\mathbf{H}_{2^k}\mathbf{x}\|_1   : \mathbf{x} \in \{-1, 1\}^{2^{k}}\right\} 
\\    = 
%    8 \max\left\{\|\mathbf{H}_{2^k}\mathbf{x}\|_1  :  \mathbf{x} \in \{-1, 1\}^{2^{k}}\right\} 
%\\    = 
    8 \|\mathbf{H}_{2^k}\|_{\infty, 1}.
  \end{multline}

  Using a computer, we can compute $\|\mathbf{H}_{1}\|_{\infty, 1} = 1$, $\|\mathbf{H}_{2}\|_{\infty, 1} = 2$, $\|\mathbf{H}_{4}\|_{\infty, 1} = 8$, $\|\mathbf{H}_{8}\|_{\infty, 1} = 20$, $\|\mathbf{H}_{16}\|_{\infty, 1} = 64$, and $\|\mathbf{H}_{32}\|_{\infty, 1} = 160$. 
  Using this fact and~the inequality (\ref{Formula1}), for even $k$, we have
  \begin{equation}\label{Formula4}
    \|\mathbf{H}_{2^{k}}\|_{\infty, 1} \geq 8^\frac{k}{2},
  \end{equation}
  and for odd $k \geq 3$, we have
  \begin{equation}\label{Formula5}
    \|\mathbf{H}_{2^{k}}\|_{\infty, 1} \geq {\textstyle \frac{5\sqrt{2}}8}  8^\frac{k}{2}.
  \end{equation}
  From Formula~(\ref{Formula2}), it follows that
  \begin{equation}\label{Formula3}
    \|\mathbf{H}_{2^k}\|_{\infty, 1} \leq (2^{k})^{\frac32} = 8^{\frac{k}{2}}.
  \end{equation}
  Combining Formulae~(\ref{Formula4}) and (\ref{Formula3}), for even $k$, we obtain the identity
  \begin{equation}\label{ExactNormEven}
    \|\mathbf{H}_{2^{k}}\|_{\infty, 1} = 8^\frac{k}2.
  \end{equation}
  Combining Formulae~(\ref{Formula5}) and (\ref{Formula3}), for odd $k \geq 3$, we obtain the double inequality
  $$
    {\textstyle \frac{5\sqrt{2}}8} 8^\frac{k}2 \leq  \|\mathbf{H}_{2^{k}}\|_{\infty, 1} \leq 8^\frac{k}2.
  $$
 
  Other ways of computing $\|\cdot\|_{\infty, 1}$ are discussed in Section~\ref{ConnectionSection}.
\end{example}

The norm $\|\mathbf{A}\|_{\infty, 1}$ and Formula~(\ref{NormIneq}) will help us find a bound on the number of large components of $\mathbf{A}\mathbf{x}$. 

\begin{example}
  Consider the matrix $\mathbf{H}_{16}$. 
  From~(\ref{ExactNormEven}), we have $\|\mathbf{H}_{16}\|_{\infty, 1} = 64$.
  Let $\mathbf{x} \in \mathbb{R}^{16}$ and $\|\mathbf{x}\|_\infty \leq 1000$.
  In this case, it is impossible to have seven components of the vector $\mathbf{H}_{16}\mathbf{x}$ equal to 10000. 
  Indeed, $7 \cdot 10000 = 70000$, but the sum of the absolute values of all the components $\|\mathbf{H}_{16}\mathbf{x}\|_1$ should be less than or equal to $\|\mathbf{H}_{16}\|_{\infty, 1} \|\mathbf{x}\|_\infty = 64 \cdot 1000 = 64000$.
\end{example}

\section{Error bounds}\label{ErrBoundSection}

In this section, we discuss upper bounds on the error $\|\mathbf{x} - \mathbf{x}'\|_\infty$.
To get started, we consider a simple case: $\mathbf{DQ}(\mathbf{x}) = \mathbf{RZ}(\mathbf{x})$
and 
$\mathbf{IQ}(\mathbf{x}) = \mathbf{x}$, where  $\mathbf{RZ}(\mathbf{x})$ is rounding
towards zero of each element of $\mathbf{x}$.
Next, we consider the case $\Delta_i = \Gamma_i = C$, $1 \leq i \leq n$.
And finally, we investigate the general case.
After that, we consider the asymptotic properties of the obtained bound and the relative error.
The main result of this section is Theorem~\ref{Theorem1}.

Note that $\mathbf{IT}(\mathbf{DT}(\mathbf{x})) = \mathbf{x}$ for any $\mathbf{x}$.
Also, note that $\mathbf{DT}$ and $\mathbf{IT}$ are linear.
%$$
%  \mathbf{DT}(a\mathbf{x} + b\mathbf{y}) = a\mathbf{DT}(\mathbf{x}) + b\mathbf{DT}(\mathbf{y})$$ and $$
%  \mathbf{IT}(a\mathbf{x} + b\mathbf{y}) = a\mathbf{IT}(\mathbf{x}) + b\mathbf{IT}(\mathbf{y}).
%$$
 
Consider the case $\mathbf{DQ}(\mathbf{x}) = \mathbf{RZ}(\mathbf{x})$
and $\mathbf{IQ}(\mathbf{x}) = \mathbf{x}$, i.e.~$\delta_i = \gamma_i = 0$ and $\Delta_i = \Gamma_i = 1$.
In this case, Formula~(\ref{pipeline}) can be rewritten as
$
  \mathbf{x}' = \mathbf{H}^{T}( \mathbf{RZ} ({\textstyle{\frac{1}{n}}}\mathbf{H} \mathbf{x})).
$
The nonlinear part of the pipeline is the transformation $\mathbf{RZ}(\mathbf{x})$.

\begin{example}\label{Ex11}
  Sometimes, the magnitude of numbers can become larger after applying the direct and inverse transforms, despite the fact that rounding makes the magnitude of numbers smaller or does not change.
    Let $\mathbf{t}_1 = \frac{1}{16}\mathbf{H}_{16}(\mathbf{x})$,
$\mathbf{t}_2 =\mathbf{RZ} (\mathbf{t_1})$, and the vector $\mathbf{x}$ be defined below.
Then the vectors $\mathbf{x}$, $\mathbf{t}_1$, $\mathbf{t}_2$, and $\mathbf{x}'$ are equal to 
  $$
    \begin{pmatrix}
      55 \\ -5 \\ -5 \\ -5 \\ -4096 \\ -5 \\ -5 \\ -4096 \\ -5 \\ -4096 \\ -5 \\ -4 \\ -5 \\ -2 \\ -5 \\ -4
    \end{pmatrix}\!,
    \begin{pmatrix}
      -768 \\ -251.875 \\ -252.25 \\ 259.375 \\ 259.125 \\ -252 \\ -251.625 \\ -763.25 \\ 259.25 \\ -252.125 \\ 771 \\ 259.625 \\ 259.625 \\ 771 \\ -252.125 \\ 259.25
    \end{pmatrix}\!,
    \begin{pmatrix}
      -768 \\ -251 \\ -252 \\ 259 \\ 259 \\ -252 \\ -251 \\ -763 \\ 259 \\ -252 \\ 771 \\ 259 \\ 259 \\ 771 \\ -252 \\ 259
    \end{pmatrix}\!, 
    \begin{pmatrix}
      55 \\ -5 \\ -5 \\ -5 \\ -4093 \\ -5 \\ -5 \\ -4097 \\ -5 \\ -4093 \\ -9 \\ -5\\ -5 \\ -1 \\ -5 \\ -5
    \end{pmatrix}\!,
  $$respectively.
  
  Therefore, $\|\mathbf{x}\|_{\infty} = 4096$ and $\|\mathbf{x}'\|_{\infty} = 4097$.
\end{example}

Let $\mathbf{y} = \frac{1}{n}\mathbf{H}\mathbf{x}$.
We can write $\mathbf{RZ}(\mathbf{y}) = \mathbf{y} + \mathcal{E}(\mathbf{y})$, where $\| \mathcal{E}(\mathbf{y})\|_\infty < 1$, because $|y - RZ(y)| < 1$ for any number.
Thus, 
%\begin{multline*}
$  \mathbf{x}' = \mathbf{IT}(\mathbf{RZ}(\mathbf{DT}(\mathbf{x}))) = \mathbf{IT}(\mathbf{y} + \mathcal{E}(\mathbf{y})) = \mathbf{H}^T({\textstyle \frac{1}{n}}\mathbf{H}\mathbf{x}) + \mathbf{H}^T(\mathcal{E}(\mathbf{y})) =  \mathbf{x} + \mathbf{H}^T(\mathcal{E}(\mathbf{y})).
%\end{multline*}
$
Therefore, using Example~\ref{Ex1}, we have
$
%\begin{multline*}
  \|\mathbf{x}' - \mathbf{x}\|_\infty = \|\mathbf{x} + \mathbf{H}^T(\mathcal{E}(\mathbf{y})) - \mathbf{x}\|_\infty  \leq \|\mathbf{H}^T(\mathcal{E}(\mathbf{y}))\|_\infty
  \leq \|\mathbf{H}^T\|_\infty \|\mathcal{E}(\mathbf{y})\|_\infty  = n.
%\end{multline*}
$
Hence, the maximal difference is bounded by $n$.
Note that this bound does not depend on $\mathbf{x}$.

\begin{example}
  For example, if $n = 16$ and $\|\mathbf{x}\|_{\infty} \leq 4096$, then $\|\mathbf{x}'\|_{\infty} \leq 4096 + 16 = 4112$.
\end{example}

Now, consider the case with nontrivial quantization.
Let again $\mathbf{y} = \frac{1}{n}\mathbf{H}\mathbf{x}$.
The nonlinear part of~(\ref{pipeline}) is $\mathbf{IQ}(\mathbf{DQ}(\mathbf{y}))$.
We can write $\mathbf{IQ}(\mathbf{DQ}(\mathbf{y})) = \mathbf{y} + \mathcal{E}(\mathbf{y})$, where $\mathcal{E}(\mathbf{y})$ is the result of the quantization errors.
We have 
$
%\begin{multline*}
  \mathbf{x}' 
  = \mathbf{IT}(\mathbf{IQ}(\mathbf{DQ}(\mathbf{DT}(\mathbf{x})))) 
  = \mathbf{IT}({\textstyle \frac{1}{n}}\mathbf{H}\mathbf{x} + \mathcal{E}(\mathbf{y}))
  = \mathbf{x} + \mathbf{IT}(\mathcal{E}(\mathbf{y})).
%\end{multline*}
$
Therefore, 
%repeating the reasoning from the previous case,
we have
\begin{multline}\label{Est1}
  \|\mathbf{x}' - \mathbf{x}\|_\infty 
  \leq \|\mathbf{H}^T(\mathcal{E}(\mathbf{y}))\|_\infty
  \leq \|\mathbf{H}^T\|_\infty \|\mathcal{E}(\mathbf{y})\|_\infty
  \\ = n \|\mathcal{E}(\mathbf{y})\|_\infty.
\end{multline}

Let us find a bound on $\|\mathcal{E}(\mathbf{y})\|_\infty$.
The graph of $z(y) = IQ_i(DQ_i(y))$ looks like a staircase; see Figure~\ref{Fig1}.
Note that $IQ_i(DQ_i(y))$ can be rewritten in the following way:
$$
IQ_i(DQ_i(y)) =\!\left\{
\begin{array}{ll}
-\!\left\lfloor\frac{\delta_i - y}{\Delta_i}\right\rfloor\!\Gamma_i - \gamma_i& \text{if } y \leq \delta_i -\Delta_i,\\
\left\lfloor\frac{y + \delta_i}{\Delta_i}\right\rfloor\!\Gamma_i + \gamma_i& \text{if } y \geq \Delta_i - \delta_i, \\
0& \text{otherwise}.
\end{array}
\right.
$$

\begin{figure}
  \centering
  \begin{tikzpicture}[scale=0.6]
    \tkzInit[xmin = -55, xmax = 55, ymin = -55, ymax = 55, xstep=10, ystep=10]
    \tkzAxeXY[/tkzdrawX/label=$y$,/tkzdrawY/label=$z$]
    \draw[thick] (-1.45, 0) -- (1.45, 0);
   \node [circle, draw, fill=none, line width = .5pt, inner sep = 0pt, minimum size = 3pt] (ca) at (-1.5, 0) {};
   \node [circle, draw, fill=none, line width = .5pt, inner sep = 0pt, minimum size = 3pt] (ca) at (1.5,0) {};
    \foreach \a in {1,...,4} {
        \draw[thick] (\a + 0.55, \a + 1) -- (\a + 1.45, \a + 1);
        \node [circle, draw, fill, line width = .5pt, inner sep = 0pt, minimum size = 3pt] (ca) at (\a + 0.5, \a + 1) {};
        \node [circle, draw, fill=none, line width = .5pt, inner sep = 0pt, minimum size = 3pt] (ca) at (\a + 1.5, \a + 1) {};
    };
    \foreach \a in {-4,...,-1} {
        \draw[thick] (\a - 0.55, \a - 1) -- (\a - 1.45, \a - 1);
        \node [circle, draw, fill, line width = .5pt, inner sep = 0pt, minimum size = 3pt] (ca) at (\a - 0.5, \a - 1) {};
        \node [circle, draw, fill=none, line width = .5pt, inner sep = 0pt, minimum size = 3pt] (ca) at (\a - 1.5, \a - 1) {};
    };
    \draw[dashed] (-5.5,-5.5) -- (5.5, 5.5);
  \end{tikzpicture}
  \label{Fig1}
  \caption{The graph of $z(y) = IQ_i(DQ_i(y))$, $\Delta_i=10$, $\delta_i=-5$, $\Gamma_i=10$, $\gamma_i=10$}
\end{figure}
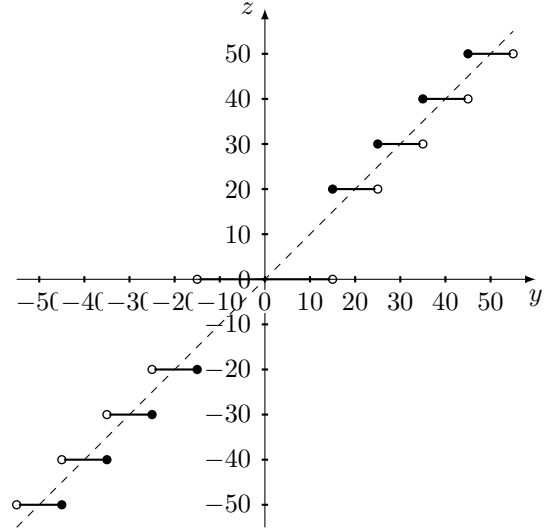

We can write 
$$
  \|\mathcal{E}(\mathbf{y})\|_\infty \leq \max_i \sup_{y \leq \|\mathbf{y}\|_\infty} |IQ_i(DQ_i(y)) - y| .
$$

Let $B_i(y) = \frac{\max(0, |y| + \delta_i)}{\Delta_i}$.
Using the fact that $IQ_i(DQ_i(y)) - y$ is piecewise linear and odd, it is enough to consider the points 
\begin{gather*}
  y_{ij} = j  \Delta_i - \delta_i, \quad1 \leq j \leq \left\lfloor B_i(\|\mathbf{y}\|_\infty)\right\rfloor,
\shortintertext{and} 
  y_{ij} - 0 =  j \Delta_i - \delta_i - 0,\quad 1 \leq j \leq \left\lceil B_i(\|\mathbf{y}\|_\infty)\right\rceil.
\end{gather*}

For these points, 
\begin{gather*}
\begin{multlined}
  IQ_i(DQ_i(y_{ij})) \\ = \!\left\lfloor\frac{j  \Delta_i - \delta_i + \delta_i}{\Delta_i}\right\rfloor\!\Gamma_i + \gamma_i = j  \Gamma + \gamma_i,
\end{multlined}
\\
\begin{multlined}
  IQ_i(DQ_i(y_{ij} - 0)) 
  \\ = 
  \!\left\{
  \begin{array}{ll}
    \!\left\lfloor\frac{j \Delta_i - \delta_i + \delta_i - 0}{\Delta_i}\right\rfloor\!\Gamma_i + \gamma_i & \text{if } j > 1,\\
    0 & \text{if } j = 1
  \end{array}
  \right.\!
\\  =   \left\{
  \begin{array}{ll}
    (j - 1) \Gamma_i + \gamma_i & \text{if } j > 1,\\
    0 & \text{if } j = 1,
  \end{array}
\right.
\end{multlined}
\end{gather*}
where $IQ_i(DQ_i(y_{ij} - 0))$ means the left-sided limit.

Consider the case $\Delta_i = \Gamma_i$.
We have
\begin{gather*}
\begin{multlined}
  IQ_i(DQ_i(y_{ij})) - y_{ij} \\= j \Gamma_i + \gamma_i - j \Delta_i + \delta_i = \gamma_i + \delta_i,
\end{multlined}
\\
\begin{multlined}
  IQ_i(DQ_i(y_{ij} - 0)) - y_{ij}
  \\ = 
  \left\{
  \begin{array}{ll}
    (j - 1)  \Gamma_i + \gamma_i - j  \Delta_i + \delta_i & \text{if } j > 1,\\
    -\Delta_i + \delta_i & \text{if } j = 1
  \end{array}
  \right.\!
 \\ = 
  \left\{
  \begin{array}{ll}
    \gamma_i + \delta_i - \Gamma_i & \text{if } j > 1,\\
    \delta_i - \Delta_i & \text{if } j = 1.
  \end{array}
  \right.
\end{multlined}
\shortintertext{Thus,}
\begin{multlined}
  |IQ_i(DQ_i(y)) - y|
  \\ \leq \max(|\gamma_i + \delta_i|, |\gamma_i + \delta_i - \Gamma_i|, |\Delta_i - \delta_i|).
\end{multlined}
\end{gather*}

Therefore, if $\Delta_i = \Gamma_i$, then
\begin{multline*}
  \|\mathbf{x}' - \mathbf{x}\|_\infty
  \\ 
  \leq n \max_i \max(|\gamma_i + \delta_i|, |\gamma_i + \delta_i - \Delta_i|, |\Delta_i - \delta_i|).
\end{multline*}
Note that this bound does not depend on $\mathbf{x}$.

Now consider the case $\Delta_i \neq \Gamma_i$.
This case is more complicated.
We have
\begin{gather*}
\begin{multlined}
  IQ_i(DQ_i(y_{ij})) - y_{ij} = j \Gamma_i + \gamma_i - j \Delta_i + \delta_i 
  \\ = j (\Gamma_i - \Delta_i) + \gamma_i + \delta_i,
\end{multlined}
\\
\begin{multlined}
  IQ(DQ(y_j - 0)) - y_j
  \\= 
  \left\{
  \begin{array}{ll}
    (j - 1)  \Gamma_i + \gamma_i - j \Delta_i + \delta_i & \text{if } j > 1,\\
    -\Delta_i + \delta_i & \text{if } j = 1
  \end{array}
  \right.\!
\\  = 
  \left\{
  \begin{array}{ll}
    j (\Gamma_i - \Delta_i) + \gamma_i + \delta_i - \Gamma_i & \text{if } j > 1,\\
    \delta_i - \Delta_i & \text{if } j = 1.
  \end{array}
  \right.
\end{multlined}
\shortintertext{Therefore,}
\begin{multlined}
  \|\mathcal{E}(\mathbf{y})\|_\infty 
 \leq
  \max_i
	\max(\{
		|j (\Gamma_i - \Delta_i) + \gamma_i + \delta_i| : \\ 1 \leq j \leq \left\lfloor B_i(\|\mathbf{y}\|_\infty)\right\rfloor
	\} 
     \cup 
    \{|\Delta_i - \delta_i|\} 
    \\ \cup
	\{
		|j  (\Gamma_i - \Delta_i) + \gamma_i + \delta_i - \Gamma_i| : \\ 2 \leq j \leq \left\lceil B_i(\|\mathbf{y}\|_\infty)\right\rceil
	\}).
\end{multlined}
\end{gather*}

Since $U_i(j) = j (\Gamma_i - \Delta_i) + \gamma_i + \delta_i$ and $V_i(j) = j (\Gamma_i - \Delta_i) + \gamma_i + \delta_i - \Gamma_i$ are arithmetic sequences, this expression can be simplified:
\begin{multline*}
  \|\mathcal{E}(\mathbf{y})\|_\infty 
  \leq 
  \max_i
  \max(
   \underbrace{|U_i(1)|, |U_i(\lfloor B_i(\|\mathbf{y}\|_\infty)\rfloor)|}_{\text{if } \lfloor B_i(\|\mathbf{y}\|_\infty)\rfloor \geq 1 },\\
   |\Delta_i - \delta_i|,
   \underbrace{|V_i(2)|, |V_i(\lceil B_i(\|\mathbf{y}\|_\infty)\rceil)|}_{\text{if }\lceil B_i(\|\mathbf{y}\|_\infty)\rceil \geq 2 }
  ).
\end{multline*}

Using the inequality~(\ref{Est1}) and the fact that $\|\mathbf{y}\|_\infty = \|\mathbf{x}\|_\infty$, we obtain the following theorem.
\begin{theorem}\label{Theorem1}
The following inequality holds:
  \begin{multline}\label{ErrorBound}
    \|\mathbf{x}' - \mathbf{x}\|_\infty \leq   n   
    \max_i 
  \max\biggl(
   \underbrace{ |(\Gamma_i - \Delta_i) + \gamma_i + \delta_i|  }_{\text{if } \|\mathbf{x}\|_\infty \geq \Delta_i - \delta_i },  \\
    \underbrace{\left|\left\lfloor \frac{\|\mathbf{x}\|_\infty + \delta_i}{\Delta_i} \right\rfloor (\Gamma_i - \Delta_i) + \gamma_i + \delta_i \right|
   }_{\text{if } \|\mathbf{x}\|_\infty \geq \Delta_i - \delta_i }, 
   |\Delta_i - \delta_i|, \\
   \underbrace{
  | 2 (\Gamma_i - \Delta_i) + \gamma_i + \delta_i - \Gamma_i| }_{\text{if }\|\mathbf{x}\|_\infty >  \Delta_i - \delta_i },\\
   \underbrace{
  \left| \left\lceil \frac{\|\mathbf{x}\|_\infty + \delta_i}{\Delta_i} \right\rceil (\Gamma_i - \Delta_i) + \gamma_i + \delta_i - \Gamma_i \right|
   }_{\text{if }\|\mathbf{x}\|_\infty >  \Delta_i - \delta_i }
  \biggr).
  \end{multline}
  If $\Delta_i = \Gamma_i$, then it can be simplified as
  \begin{multline}\label{ErroBound1}
    \|\mathbf{x}' - \mathbf{x}\|_\infty \leq n \max_i \max(|\gamma_i + \delta_i|,\\ |\gamma_i + \delta_i - \Delta_i|, |\Delta_i - \delta_i|).
  \end{multline}
\end{theorem}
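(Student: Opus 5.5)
The plan is to split $\mathbf{x}'$ into the exact reconstruction plus the inverse transform of a per-coordinate quantization error. By Theorem~\ref{Theorem1}'s setting, write $\mathbf{y} = \frac{1}{n}\mathbf{H}\mathbf{x}$ and $\mathbf{IQ}(\mathbf{DQ}(\mathbf{y})) = \mathbf{y} + \mathcal{E}(\mathbf{y})$. Linearity of $\mathbf{IT}$ and $\mathbf{IT}(\mathbf{DT}(\mathbf{x}))=\mathbf{x}$ give $\mathbf{x}'-\mathbf{x} = \mathbf{H}^T\mathcal{E}(\mathbf{y})$. By~(\ref{NormIneq}) and Example~\ref{Ex1}, this yields~(\ref{Est1}): $\|\mathbf{x}'-\mathbf{x}\|_\infty \le n\|\mathcal{E}(\mathbf{y})\|_\infty$. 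It therefore suffices to bound each $|IQ_i(DQ_i(y_i)) - y_i|$ uniformly over $|y_i| \le \|\mathbf{y}\|_\infty$.

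Next I need $\|\mathbf{y}\|_\infty$ in terms of $\|\mathbf{x}\|_\infty$. Each component of $\mathbf{y}$ is an average of $\pm x_j$, so $\|\mathbf{y}\|_\infty \le \frac1n\|\mathbf{H}\|_\infty\|\mathbf{x}\|_\infty = \|\mathbf{x}\|_\infty$. Only this inequality is needed, not equality, because the scalar bound below is monotone in the range considered: enlarging the range only adds more breakpoints to the maximum.

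The core step is the scalar estimate. The function $e_i(y)=IQ_i(DQ_i(y))-y$ is odd, so I restrict to $0\le y\le M:=\|\mathbf{x}\|_\infty$. Using the piecewise formula for $IQ_i(DQ_i(y))$ stated before the theorem, $e_i$ behaves as follows:
\begin{itemize}
\item on the dead zone $[0,\Delta_i-\delta_i)$ it equals $-y$;
\item on each step $[j\Delta_i-\delta_i,(j+1)\Delta_i-\delta_i)$ it is linear with slope $-1$.
\end{itemize}
Hence the supremum of $|e_i|$ over $[0,M]$ is attained either at a left endpoint $y_{ij}=j\Delta_i-\delta_i$ or as a left limit at $y_{ij}-0$, for $j$ up to $\lfloor B_i(M)\rfloor$ and $\lceil B_i(M)\rceil$ respectively. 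The point $y=0$ contributes $0$. Evaluating gives $U_i(j)=j(\Gamma_i-\Delta_i)+\gamma_i+\delta_i$ at the endpoints, $V_i(j)=U_i(j)-\Gamma_i$ at the left limits with $j\ge2$, and $\delta_i-\Delta_i$ at $j=1$.

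Two points need care here. First, the last step is truncated at $M$ rather than at a left limit. Its value at $M$ lies between $e_i$ at the step's left endpoint and the would-be left limit $V_i(\lceil B_i(M)\rceil)$, so the ceiling covers it. Second, the dead-zone value must be handled when $M<\Delta_i-\delta_i$. There $|e_i|\le M<\Delta_i-\delta_i$, which is covered by the term $|\Delta_i-\delta_i|$.

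The last step turns the maximum over $j$ into the finite list in~(\ref{ErrorBound}). Since $U_i$ and $V_i$ are affine in $j$, $|U_i|$ and $|V_i|$ are convex in $j$, so their maxima over an integer interval occur at the interval's endpoints. The endpoints are $j=1$ and $j=\lfloor B_i(M)\rfloor$ for $U_i$, and $j=2$ and $j=\lceil B_i(M)\rceil$ for $V_i$. The side conditions $M\ge\Delta_i-\delta_i$ and $M>\Delta_i-\delta_i$ are exactly the conditions under which these index ranges are nonempty.

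For $\Delta_i=\Gamma_i$, the sequences are constant, $U_i\equiv\gamma_i+\delta_i$ and $V_i\equiv\gamma_i+\delta_i-\Delta_i$, so~(\ref{ErroBound1}) follows at once.

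I expect the main obstacle to be the bookkeeping at the boundaries: the truncated final step, the dead-zone segment, and a possibly negative $\delta_i$ shifting $\Delta_i-\delta_i$. Everything else is linear algebra already set up in~(\ref{Est1}).
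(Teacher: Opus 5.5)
Your proposal is correct and is essentially the paper's proof: you use the same decomposition $\mathbf{x}'-\mathbf{x}=\mathbf{H}^T\mathcal{E}(\mathbf{y})$ giving the factor $n$, the same evaluation of the slope~$-1$ staircase error at the breakpoints $y_{ij}=j\Delta_i-\delta_i$ and their left limits (yielding $U_i(j)$, $V_i(j)$ and $\delta_i-\Delta_i$), and the same endpoint argument for the arithmetic sequences. Your explicit treatment of the truncated last step, and your use of $\|\mathbf{y}\|_\infty\le\|\mathbf{x}\|_\infty$, tighten the paper's argument rather than change its route; the paper instead asserts the equality $\|\mathbf{y}\|_\infty=\|\mathbf{x}\|_\infty$, which fails when $\mathbf{x}$ is a standard basis vector, but only the inequality is needed.
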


\begin{example}
Let $n=16$, $\Delta_i = \Gamma_i = 800$, $\delta_i=-1000$, and $\gamma_i=1400$.
From Theorem~\ref{Theorem1}, it follows that
\begin{multline*}
  \|\mathbf{x}' - \mathbf{x}\|_\infty \leq 16 \cdot \max(|1400 - 1000|,\\ |1400 - 1000 - 800|,  |800 + 1000|) = 28800.
\end{multline*}
\end{example}

Now, consider the asymptotic behavior of the bound~(\ref{ErrorBound}).
If $\|\mathbf{x}\|_\infty$ is large enough and there is $i$ such that $\Delta_i \neq \Gamma_i$, we can leave only the second and fifth arguments of $\max$ in~(\ref{ErrorBound}).
Let ${i^{*}}$ be any index such that $\frac{|\Gamma_{i^{*}} - \Delta_{i^{*}}|}{\Delta_{i^{*}}} =  \max_i
\frac{|\Gamma_{i} - \Delta_{i}|}{\Delta_i}$.
Then we have the following inequality for the relative error:
\begin{equation*}
  \frac{\|\mathbf{x}' - \mathbf{x}\|_\infty }{\|\mathbf{x}\|_\infty } 
  \leq
  n \frac{|\Gamma_{i^{*}} - \Delta_{i^{*}}|}{\Delta_{i^{*}}} + \varepsilon(\|\mathbf{x}\|_\infty),
\end{equation*}
where $\varepsilon(\|\mathbf{x}\|_\infty) = o(\|\mathbf{x}\|_\infty)$ as $x$ tends to $\infty$.

\section{Bounds on the largest absolute value}\label{LargestSection}

In this section, we discuss the largest absolute value that the components of $\mathbf{x}' = \mathbf{IT}(\mathbf{IQ}(\mathbf{DQ}(\mathbf{DT}(\mathbf{x}))))$ can attain.
To obtain a bound on this value, we can, of course, use the error bounds~(\ref{ErrorBound}) and~(\ref{ErroBound1}) from Theorem~\ref{Theorem1}.
For example, from~(\ref{ErroBound1}) we have:
\begin{multline}\label{CaseWO1}
  \|\mathbf{x}'\|_\infty \leq \| \mathbf{x}\|_\infty +
   n\max_i \max(|\gamma_i + \delta_i|,\\ |\gamma_i + \delta_i - \Delta_i|, |\Delta_i - \delta_i|).
  \end{multline}

\begin{example}\label{LargestSectionEx1}
Let $\|\mathbf{x}\|_\infty \leq 2048$, $n=16$, $\Delta_i = \Gamma_i = 800$, $\delta_i=-1000$, and $\gamma_i=1400$.
The formula~(\ref{CaseWO1}) gives us
\begin{multline*}
  \|\mathbf{x}'\|_\infty \leq 2048 + 16 \cdot \max(|1400 - 1000|, \\ |1400 - 1000 - 800|, |800 + 1000|) = 30848.
\end{multline*}
\end{example}
We can use another approach to find a bound on $\|\mathbf{x}'\|_\infty$.
Let $\mathbf{y} = \frac{1}{n}\mathbf{H}(\mathbf{x})$ and $\mathbf{z} = \mathbf{IQ}(\mathbf{DQ}(\mathbf{y}))$.
Using Example~\ref{Ex1}, it is easy to see that
\begin{gather}\nonumber
\|\mathbf{x}'\|_\infty = \|\mathbf{IT}(\mathbf{z})\|_\infty \leq
\|\mathbf{H}^T\|_\infty  \|\mathbf{z}\|_\infty = n  \|\mathbf{z}\|_\infty.
\shortintertext{Note that} 
\label{ZNorm}
\|\mathbf{z}\|_\infty \leq 
\max_i IQ_i(DQ_i(\|\mathbf{x}\|_\infty)).
\shortintertext{Therefore,}
\nonumber
  \|\mathbf{x}'\|_\infty \leq 
n  \max_i IQ_i(DQ_i(\|\mathbf{x}\|_\infty)).
\end{gather}

To improve this bound, we can use the $\|\cdot\|_{1, \infty}$ and
 $\|\cdot\|_{1}$ norms.
Indeed, note that
\begin{multline*}
\|\mathbf{z}\|_1 = |z_1| + \cdots + |z_n|
\\ \leq 
\sum_{|z_i| = 0} |z_i| + \sum_{|z_i| \neq 0} |z_i| \leq K  \|\mathbf{z}\|_\infty,
\end{multline*}
where $K$ is the number of the non-zero components of $\mathbf{z}$.
Hence, 
\begin{multline*}
  \|\mathbf{x}'\|_\infty = \|\mathbf{IT}(\mathbf{z})\|_\infty \leq \|\mathbf{IT}\|_{1, \infty}  \|\mathbf{z}\|_1 = 1  \|\mathbf{z}\|_1 \\ = K  \|\mathbf{z}\|_\infty.
\end{multline*}
Using~(\ref{ZNorm}), we obtain
\begin{equation}\label{K2}
  \|\mathbf{x}'\|_\infty \leq  K  
  \max_i IQ_i(DQ_i(\|\mathbf{x}\|_\infty)).
\end{equation} 
Therefore, we need to find a bound on the number of nonzero components $K$.

Note that 
\begin{multline*}
  \|\mathbf{y}\|_1 = |y_1| + \ldots + |y_n|\\ = 
  \sum_{|y_i| < \Delta_i - \delta_i} |y_i|  + 
  \sum_{|y_i| \geq \Delta_i - \delta_i} |y_i|  
  \geq 
  \sum_{|y_i| \geq \Delta_i - \delta_i} |y_i|  \\ \geq
\sum_{|y_i| \geq \Delta_i - \delta_i} \min_i(\Delta_i - \delta_i) 
=
K \min_i(\Delta_i - \delta_i).
\end{multline*}

We know that $\|\mathbf{H}\mathbf{x}\|_1 \leq \|\mathbf{H}\|_{\infty, 1}  \|\mathbf{x}\|_\infty$.
Hence, using~(\ref{Formula2}), we have $$\|\mathbf{y}\|_1 \leq \frac{1}{n}\|\mathbf{H}\|_{\infty, 1}\|\mathbf{x}\|_\infty \leq \frac{n^{3/2}}{n}  \|\mathbf{x}\|_\infty = \sqrt{n}  \|\mathbf{x}\|_\infty .$$
Therefore, 
\begin{equation}\label{K1}
  K \leq \min\!\left(n, \left\lfloor\frac{\sqrt{n}  \|\mathbf{x}\|_\infty}{\min_i(\Delta_i - \delta_i)}\right\rfloor\right)\!.
\end{equation}

Combining the inequalities~(\ref{K1}) and~(\ref{K2}), we obtain the following result.

\begin{theorem}\label{Theorem2} The following inequality holds:
  \begin{multline}\label{IneqSec2}
    \|\mathbf{x}'\|_\infty \leq \min\!\left(n, \left\lfloor\frac{\sqrt{n} \|\mathbf{x}\|_\infty}{\min_i(\Delta_i - \delta_i)}\right\rfloor \right) \\ \cdot \max_i IQ_i(DQ_i(\|\mathbf{x}\|_\infty)).
  \end{multline}
\end{theorem}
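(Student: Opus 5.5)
The plan is to combine the two ingredients already derived before the statement: the bound~(\ref{K2}) in terms of the number $K$ of nonzero components of $\mathbf{z} = \mathbf{IQ}(\mathbf{DQ}(\mathbf{y}))$, and the counting bound~(\ref{K1}) on $K$. Throughout, $\mathbf{y} = \frac{1}{n}\mathbf{H}\mathbf{x}$.

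For~(\ref{K2}), we write $\mathbf{x}' = \mathbf{H}^T\mathbf{z}$. Since every entry of $\mathbf{H}^T$ is $\pm1$, we have $\|\mathbf{H}^T\|_{1,\infty} = 1$, and hence $\|\mathbf{x}'\|_\infty \leq \|\mathbf{z}\|_1 \leq K\|\mathbf{z}\|_\infty$. We then bound $\|\mathbf{z}\|_\infty$ componentwise. By Example~\ref{Ex1} and~(\ref{NormIneq}), $|y_i| \leq \frac1n\|\mathbf{H}\|_\infty\|\mathbf{x}\|_\infty = \|\mathbf{x}\|_\infty$. The map $t \mapsto |IQ_i(DQ_i(t))|$ is even and nondecreasing in $|t|$: the floor of $\max(0,|t|+\delta_i)/\Delta_i$ is monotone and $\Gamma_i>0$. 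Hence $|z_i| \leq |IQ_i(DQ_i(\|\mathbf{x}\|_\infty))|$. One subtlety must be checked here: when $DQ_i$ gives $0$, the sign factor makes the value $0$, so a negative $\gamma_i$ does not cause trouble. In the nonzero case the absolute value is $\Gamma_i m+\gamma_i$ with $m \geq 1$, and this is what~(\ref{ZNorm}) states, which we take as given.

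For~(\ref{K1}), note that a component $z_i$ is nonzero exactly when $DQ_i(y_i)\neq 0$ (assuming the nonzero outputs of $IQ_i$ are nonzero). This forces $|y_i|+\delta_i \geq \Delta_i$, i.e.\ $|y_i| \geq \Delta_i-\delta_i \geq \min_j(\Delta_j-\delta_j)$. Summing over the nonzero indices gives $K\min_j(\Delta_j-\delta_j) \leq \|\mathbf{y}\|_1$. By~(\ref{NormIneq}) and~(\ref{Formula2}), $\|\mathbf{y}\|_1 \leq \frac1n\|\mathbf{H}\|_{\infty,1}\|\mathbf{x}\|_\infty \leq \sqrt n\,\|\mathbf{x}\|_\infty$. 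Dividing, and using that $K$ is an integer, we can take the floor. Trivially $K\leq n$, which gives~(\ref{K1}).

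Substituting~(\ref{K1}) into~(\ref{K2}) yields~(\ref{IneqSec2}). The main obstacle is the division step: it requires $\min_i(\Delta_i-\delta_i)>0$. If this quantity is $\leq 0$, the dead zone is empty, and we must read the minimum as simply $n$, which is still valid via the cruder bound $\|\mathbf{x}'\|_\infty \leq n\|\mathbf{z}\|_\infty$. I would state this convention explicitly. The monotonicity claim behind~(\ref{ZNorm}) is the other point to verify carefully, in particular the sign of $\gamma_i$. If $\gamma_i<0$ could make $\Gamma_i m+\gamma_i$ negative, I would replace $IQ_i$ by its absolute value in the bound.
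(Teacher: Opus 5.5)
Your proposal follows the paper's proof exactly: $\|\mathbf{x}'\|_\infty\le\|\mathbf{H}^T\|_{1,\infty}\|\mathbf{z}\|_1\le K\|\mathbf{z}\|_\infty$ together with (\ref{ZNorm}), and $K\min_i(\Delta_i-\delta_i)\le\|\mathbf{y}\|_1\le\frac1n\|\mathbf{H}\|_{\infty,1}\|\mathbf{x}\|_\infty\le\sqrt n\,\|\mathbf{x}\|_\infty$. Your remarks on positivity of $\min_i(\Delta_i-\delta_i)$ and on $z_i\ne0\Rightarrow|y_i|\ge\Delta_i-\delta_i$ make explicit what the paper leaves tacit. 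One correction: for negative $\gamma_i$ your absolute-value fallback does not rescue (\ref{ZNorm}). For example, $n=2$, $\Delta_i=\Gamma_i=1$, $\delta_i=0$, $\gamma_i=-10$, $\mathbf{x}=(10,0)^T$ gives $\mathbf{y}=(5,5)^T$ and $\mathbf{x}'=(-10,0)^T$, while $IQ_i(DQ_i(10))=0$, so the bound fails with or without absolute values. The monotonicity step, in your proof and in the paper's, genuinely needs the extra hypothesis $\Gamma_i+\gamma_i\ge0$.
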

This theorem should be used in combination with Theorem~\ref{Theorem1}.
Given $\Delta_i$, $\Gamma_i$, $\delta_i$, $\gamma_i$, and the bound on $\|\mathbf{x}\|_\infty$, we need to calculate the bounds based on Theorem~\ref{Theorem1} and Theorem~\ref{Theorem2} and choose the best result.

\begin{example}
  Consider the parameters given in Example~\ref{LargestSectionEx1}: $\|\mathbf{x}\|_\infty \leq 2048$, $n=16$, $\Delta_i = \Gamma_i = 800$, $\delta_i=-1000$, and $\gamma_i=1400$.
  In this case, the formula~(\ref{IneqSec2}) gives us the following bound
\begin{multline*}
  \|\mathbf{x}'\|_\infty \leq \left\lfloor\frac{\sqrt{16} \cdot 2048}{800 + 1000}\right\rfloor  \\\cdot \left(\left\lfloor\frac{2048 - 1000}{800}\right\rfloor\! \cdot 800 + 1400\right) = 8800.
\end{multline*}
But the formula~(\ref{CaseWO1}) gives us a worse result:
$$
  \|\mathbf{x}'\|_\infty \leq 30848.
$$
\end{example}

\section{Relation to the excess of a matrix}\label{ConnectionSection}

As we saw in the previous section, the norm $\|\mathbf{H}\|_{\infty, 1}$ was useful to find a bound on $\|\mathbf{x}\|_\infty$. It would be interesting to know the exact value of this norm.
Unfortunately, Example~\ref{Ex2} gives us an upper bound only and the exact values only for small matrices and for orders $2^k$, where $k$ is even.
In this section, we consider the connection between the norm $\|\cdot\|_{\infty, 1}$ and the excess of a matrix.
The excess of a Hadamard matrix has been widely studied for many years~\cite{SchmidtWang1977,Best1977,EnomotoMiyamoto1980,KouniasFarmakis1988,HirasakaMomiharaSuda2017}.
The problem of finding the exact value is combinatorial, and fast methods are unknown, but many lower and upper bounds were obtained.
The main result of this section is Theorem~\ref{Theorem3}.

For $\mathbf{H} \in \mathcal{H}_n$, the sum of all the entries of $\mathbf{H}$ is called the \textit{excess} of $\mathbf{H}$ and is denoted by $\sigma(\mathbf{H})$.
For a subset $\mathcal{S}$ of $\mathcal{H}_n$, the \textit{maximal excess} of $\mathcal{S}$ is defined as 
$$
\sigma(\mathcal{S}) = \max\{\sigma(\mathbf{H}) : \mathbf{H} \in \mathcal{S}\}.
$$
Two Hadamard matrices are \textit{equivalent} if one of them can be transformed to the other by permutation and negation of rows and columns. The equivalence class containing $\mathbf{H}$ is denoted by $[\mathbf{H}]$.

The exact values of the maximal excess are known only for small Hadamard matrices. 
Schmidt and Wang~\cite{SchmidtWang1977}, Best~\cite{Best1977}, Enomoto and Miyamoto~\cite{EnomotoMiyamoto1980}, and other authors obtained some upper and lower bounds on $\sigma(\mathbf{H})$. Let us recall Best's result.

\begin{theorem}[Best \cite{Best1977}]
For any $\mathbf{H} \in \mathcal{H}_n$, the following inequalities hold:
\begin{gather*}
\sigma([\mathbf{H}]) \geq \frac{n^2}{2^n}\binom{n}{n/2}, \quad \sigma([\mathbf{H}]) \leq n^\frac{3}{2},
\\
\shortintertext{and}
\begin{multlined}
\sigma([\mathbf{H}]) \leq  \max\Biggl\{ \sum_{i = 1}^n s_i :   \sum_{i = 1}^n s_i = n^2,  s_i \in 2\mathbb{Z}, \\ s_i \equiv s_j \bmod 4\Biggr\},
\end{multlined}
\end{gather*}
\end{theorem}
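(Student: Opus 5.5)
The three inequalities can be reduced to statements about $\sigma(\mathbf{H}')$ for a single representative $\mathbf{H}'$ of $[\mathbf{H}]$. Let $\mathbf{e}=(1,\dots,1)^T$. Then $\sigma(\mathbf{H}')=\mathbf{e}^T\mathbf{H}'\mathbf{e}$. Negating rows and columns changes this to $\mathbf{u}^T\mathbf{H}\mathbf{v}$ with $\mathbf{u},\mathbf{v}\in\{-1,1\}^n$, and permutations do not change the total sum, so
$$\sigma([\mathbf{H}])=\max\{\mathbf{u}^T\mathbf{H}\mathbf{v} : \mathbf{u},\mathbf{v}\in\{-1,1\}^n\}.$$
For fixed $\mathbf{v}$, the best choice is $\mathbf{u}=\sign(\mathbf{H}\mathbf{v})$. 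This gives $\sigma([\mathbf{H}])=\max_{\mathbf{v}}\|\mathbf{H}\mathbf{v}\|_1=\|\mathbf{H}\|_{\infty,1}$ by~(\ref{NormInfty1}). This identity is the bridge to the rest of the section, and I would prove it first.

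\emph{Upper bound $n^{3/2}$.} By the identity, this is exactly Formula~(\ref{Formula2}) from Example~\ref{Ex2}.

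\emph{Lower bound.} I would average over $\mathbf{v}$ uniformly distributed on $\{-1,1\}^n$. Since $\sigma([\mathbf{H}])\ge \mathbb{E}\|\mathbf{H}\mathbf{v}\|_1=\sum_i \mathbb{E}|\mathbf{h}_i^T\mathbf{v}|$, it suffices to compute one term. Each row $\mathbf{h}_i$ has entries $\pm1$, so $\mathbf{h}_i^T\mathbf{v}$ has the same distribution as a sum of $n$ independent random signs. The standard identity for a symmetric random walk of even length $n$ is $\mathbb{E}|S_n|=n\binom{n}{n/2}2^{-n}$. I would verify it by writing $\mathbb{E}|S_n|=2^{-n}\sum_k|n-2k|\binom nk$ and telescoping, using $(n-2k)\binom nk = n\bigl(\binom{n-1}{k}-\binom{n-1}{k-1}\bigr)$. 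Summing over the $n$ rows gives $n^2\binom{n}{n/2}/2^n$. This computation is the only mildly technical step. For the degenerate cases $n=1,2$, I would simply check directly.

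\emph{Third bound.} I would fix a representative $\mathbf{H}'$ of maximal excess and let $s_j$ be its column sums, so $\sigma=\sum_j s_j$. Three facts about the $s_j$ are needed.
\begin{enumerate}
\item \emph{Parity.} Each $s_j$ is a sum of $n$ signs with $n$ even, so $s_j\in2\mathbb{Z}$.
\item \emph{Congruence mod $4$.} Two columns $\mathbf{c}_j,\mathbf{c}_k$ are orthogonal, so they agree in exactly $n/2$ positions. Writing $s_j$ and $s_k$ in terms of the number of $-1$ entries, their difference is twice the number of disagreements on $-1$ entries counted with sign, and a short count should force $s_j\equiv s_k \pmod 4$ when $4\mid n$. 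Making this count precise is the step I expect to require the most care, and I would check it on small examples first.
\item \emph{Sum of squares.} Orthogonality of columns gives $\sum_j s_j^2=\mathbf{e}^T\mathbf{H}'^T\mathbf{H}'\mathbf{e}\cdot$ with $\mathbf{H}'\mathbf{e}$ replaced appropriately; more concretely, by~(\ref{HadamardProperty}), $\|\mathbf{H}'^T\mathbf{e}\|_2^2=\mathbf{e}^T\mathbf{H}'\mathbf{H}'^T\mathbf{e}=n\|\mathbf{e}\|_2^2=n^2$, since $\mathbf{H}'^T$ is also Hadamard.
\end{enumerate}
I read the constraint $\sum s_i=n^2$ in the stated bound as the sum-of-squares condition $\sum s_i^2=n^2$. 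With that reading, $(s_j)$ is a feasible point of the maximization problem, and so $\sigma([\mathbf{H}])$ is bounded by its maximum.
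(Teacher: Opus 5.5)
Your proposal is correct. Your reading of the third constraint as $\sum_i s_i^2=n^2$ is the right one: taken literally, $\sum_i s_i=n^2$ makes the right-hand side equal to $n^2$, so the bound would reduce to the triviality $\sigma([\mathbf{H}])\le n^2$.

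The paper gives no proof of this theorem; it only quotes it from Best. So the only overlap is your first step and your $n^{3/2}$ bound, which are exactly the paper's proof of Theorem~\ref{Theorem3} combined with Formula~(\ref{Formula2}). The averaging argument for the lower bound and the column-sum argument for the third bound are standard and work as you describe. The detour through $\|\mathbf{H}\|_{\infty,1}$ is not needed for the second inequality. It follows from your sum-of-squares identity by Cauchy--Schwarz, $\sum_j s_j\le\sqrt{n}\,\bigl(\sum_j s_j^2\bigr)^{1/2}=n^{3/2}$. This is the same estimate as Formula~(\ref{Formula2}), applied to column sums instead of to the row sums $\mathbf{H}\mathbf{x}$.

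Three things to tighten.
\begin{enumerate}
\item \emph{The mod-$4$ count.} This is one line. Let $q$ (resp.\ $r$) be the number of rows in which column $j$ is $-1$ and column $k$ is $+1$ (resp.\ the reverse). Then $s_j-s_k=2(r-q)$, and orthogonality gives $q+r=n/2$. When $4\mid n$, $n/2$ is even, so $r-q\equiv r+q\equiv 0\pmod 2$, and hence $s_j\equiv s_k\pmod 4$.
\item \emph{The hypothesis $4\mid n$.} This is a genuine hypothesis of the third inequality, not a degenerate case you can check by hand. Every $2\times 2$ Hadamard matrix has one column sum equal to $\pm2$ and the other equal to $0$. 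The feasible set $\{s_1^2+s_2^2=4,\ s_i\in2\mathbb{Z},\ s_1\equiv s_2\bmod 4\}$ is empty. So under your reading the inequality fails for $n=2$, and ``for any $\mathbf{H}\in\mathcal{H}_n$'' must be restricted. For $n=1$, even the lower bound is undefined.
\item \emph{The sum-of-squares item.} There, $\mathbf{e}^T\mathbf{H}'^T\mathbf{H}'\mathbf{e}$ is the sum of squared \emph{row} sums. The column-sum identity you need is the one you wrote next, $\|\mathbf{H}'^T\mathbf{e}\|_2^2=\mathbf{e}^T\mathbf{H}'\mathbf{H}'^T\mathbf{e}=n^2$, which uses $\mathbf{H}'\mathbf{H}'^T=n\mathbf{I}_n$.
\end{enumerate}
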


Enomoto and Miyamoto proved the following two theorems. We recall them to demonstrate that better bounds are quite cumbersome.
\begin{theorem}[Enomoto and Miyamoto \cite{EnomotoMiyamoto1980}]  For any $\mathbf{H} \in \mathcal{H}_n$, the following inequality holds:
\begin{gather*}
\sigma([\mathbf{H}]) \geq \max_{m \in \mathbb{N} \,:\, 1 \leq m \leq n}{Q_1(n, m)},
\shortintertext{where}
Q_1(n, m) =|n - 2m| + \frac{2(n - 1) P_1(n, m)}{\binom{n}{m}}
\shortintertext{and}
P_1(n, m) = 
\left\{
\begin{array}{ll}
n\binom{\frac{n}{2} - 1}{\frac{m - 1}{2}}^2 & \text{ if } m \in 2\mathbb{N} + 1, \\
m\binom{\frac{n}{2}}{\frac{m}{2}}^2 - n \binom{\frac{n}{2} - 1}{\frac{m}{2} - 1}^2 & \text{ if } m \in 2\mathbb{N}.
\end{array}
\right.
\end{gather*}
\end{theorem}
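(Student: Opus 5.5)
The plan is to prove the bound by averaging over a well-chosen family of equivalent matrices: the maximum over a class is at least the average over any subfamily. The first step is to express the excess reachable from $\mathbf{H}$ through a vector norm. Negate the columns according to a sign vector $\mathbf{x} \in \{-1,1\}^n$, which replaces $\mathbf{H}$ by $\mathbf{H}\operatorname{diag}(\mathbf{x})$, whose $i$-th row sum is $(\mathbf{H}\mathbf{x})_i$. Then negate each row whose sum is negative. The resulting matrix lies in $[\mathbf{H}]$ and has excess $\sum_i |(\mathbf{H}\mathbf{x})_i| = \|\mathbf{H}\mathbf{x}\|_1$. Hence $\sigma([\mathbf{H}]) \geq \|\mathbf{H}\mathbf{x}\|_1$ for every $\mathbf{x}\in\{-1,1\}^n$. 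By~(\ref{NormInfty1}) this is essentially the identity $\sigma([\mathbf{H}]) = \|\mathbf{H}\|_{\infty,1}$. For the lower bound, only the inequality is needed.

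Next, normalize. Replacing $\mathbf{H}$ by an equivalent matrix does not change $[\mathbf{H}]$, so we may negate columns to make the first row all $+1$. Orthogonality to the first row then forces every other row to contain exactly $n/2$ entries $+1$ and $n/2$ entries $-1$. Fix $m$ with $1\le m\le n$ and let $\mathbf{x}$ be uniformly random among the $\binom{n}{m}$ sign vectors with exactly $m$ entries equal to $-1$. Then $\sigma([\mathbf{H}]) \geq \mathbb{E}\,\|\mathbf{H}\mathbf{x}\|_1$. The first row contributes $|n-2m|$ deterministically. For any other row, let $a$ be the number of $-1$ entries of $\mathbf{x}$ falling on the $+1$ positions of that row. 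A direct count shows that row $i$ contributes $|2m-4a|$. Moreover, $a$ is hypergeometric: it takes value $a$ in $\binom{n/2}{a}\binom{n/2}{m-a}$ of the $\binom{n}{m}$ cases. Since this law is the same for all $n-1$ remaining rows,
$$
\sigma([\mathbf{H}]) \geq |n-2m| + \frac{n-1}{\binom{n}{m}} \sum_{a} |2m-4a| \binom{n/2}{a}\binom{n/2}{m-a}.
$$
Taking the maximum over $m$ then gives the theorem, provided that
$$
\sum_a |2m-4a|\binom{n/2}{a}\binom{n/2}{m-a} = 2P_1(n,m).
$$

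The main obstacle is this closed-form evaluation of the absolute-value sum. I plan to use the symmetry $a \leftrightarrow m-a$ of the summand to write the sum as twice $\sum_{a< m/2} (2m-4a)\binom{n/2}{a}\binom{n/2}{m-a}$. I will then split $2m-4a = 2(m-a) - 2a$ and apply $k\binom{N}{k} = N\binom{N-1}{k-1}$ with $N=n/2$. Both pieces become $n\binom{n/2-1}{\cdot}\binom{n/2}{\cdot}$ terms, which should telescope, via Pascal's rule on one factor, into a single boundary product at $a\approx m/2$. For odd $m$ the boundary term should be $n\binom{n/2-1}{(m-1)/2}^2$. For even $m$ the middle term $a=m/2$ vanishes, and the telescoping leaves $m\binom{n/2}{m/2}^2 - n\binom{n/2-1}{m/2-1}^2$. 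If the telescoping is awkward, I will check it by induction on $m$, or by comparing coefficients of the generating function $(1+t)^{n/2}(1+t)^{n/2}$ truncated at degree $m/2$.
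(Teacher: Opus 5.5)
Your proposal is correct. The paper gives no proof of this statement; it only quotes it from Enomoto and Miyamoto, so there is no in-paper argument to compare against.

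\textbf{Link to the paper.} Your first step, $\sigma([\mathbf{H}])\ge\|\mathbf{H}\mathbf{x}\|_1$ obtained by column negation followed by row normalization, is exactly the ``$\ge$'' half of the paper's proof of Theorem~\ref{Theorem3}.

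\textbf{The averaging step.} Normalizing the first row and then averaging over sign vectors of fixed weight $m$ refines Best's averaging over all $2^n$ sign vectors; averaging your bound over $m$ with weights $\binom{n}{m}2^{-n}$ recovers Best's bound. Your reduction of each non-first row to a hypergeometric variable $a$ contributing $|2m-4a|$ is right.

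\textbf{The telescoping step.} The telescoping you anticipate works, with one small correction for even $m$. Write $N=n/2$ and $g(a)=\binom{N-1}{a}\binom{N-1}{m-1-a}$. Your split together with Pascal's rule gives the termwise identity
\[
(2m-4a)\binom{N}{a}\binom{N}{m-a} = n\bigl(g(a)-g(a-1)\bigr).
\]
This holds for all integers $a$ under the usual convention that binomial coefficients vanish out of range, so no range bookkeeping is needed. Hence $\sum_{a<m/2}$ equals $n\,g(\lfloor (m-1)/2\rfloor)$.

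For odd $m$ this is $n\binom{N-1}{(m-1)/2}^2=P_1(n,m)$ directly. For even $m$ the telescoping leaves $n\binom{N-1}{m/2-1}\binom{N-1}{m/2}$, which is not literally the expression in $P_1$. You need one more step: use $m\binom{N}{m/2}=n\binom{N-1}{m/2-1}$ and Pascal's rule to check that
\[
m\binom{N}{m/2}^2-n\binom{N-1}{m/2-1}^2 = n\binom{N-1}{m/2-1}\binom{N-1}{m/2}.
\]
With that, your doubled sum equals $2P_1(n,m)$, as required.

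The degenerate case $n=1$ is trivially covered, since the factor $n-1$ kills the $P_1$ term.
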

\begin{theorem}[Enomoto and Miyamoto \cite{EnomotoMiyamoto1980}] For any $\mathbf{H} \in \mathcal{H}_n$, the following inequality holds:
\begin{gather*}
\sigma([\mathbf{H}]) \geq \max_{m \in 2\mathbb{N} + 1 \,:\, \frac{n}{4} \leq m \leq \frac{3n}{4}}{Q_2(n, m)},
\shortintertext{where}
Q_2(n, m) = |2n - 4m|  + \frac{n(n - 2)P_2(n, m)}{\binom{\frac{n}{2}}{\frac{n}{4}} \binom{\frac{n}{2}}{m - \frac{n}{4}}}
\shortintertext{and}
\begin{multlined}
P_2(n, m) = \sum_{j = 0}^{m - \frac{n}{4}}{\!\binom{\frac{n}{4} - 1}{\frac{m - 1}{2} - j}\!}^2 \!\binom{\frac{n}{4}}{j}\!\binom{\frac{n}{4}}{m - \frac{n}{4} - j} + \\ 
\sum_{j = 0}^{m - \frac{n}{4} - 1}{\!\binom{\frac{n}{4}}{\frac{m - 1}{2} - j}\!}^2 \!\binom{\frac{n}{4} - 1}{j}\!\binom{\frac{n}{4} - 1}{m - \frac{n}{4} - j - 1}.    
\end{multlined}
\end{gather*}
\end{theorem}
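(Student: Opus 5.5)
The plan is to reduce the maximal excess to a maximization over sign vectors and then bound that maximum from below by an average over a carefully chosen family of sign vectors.

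\emph{Reduction.} Negating rows and columns of $\mathbf{H}$ means replacing it by $\mathbf{D}_1\mathbf{H}\mathbf{D}_2$ with $\pm1$ diagonal matrices; permutations do not change the excess. Fix the column signs $\mathbf{x}=\mathbf{D}_2\mathbf{1}\in\{-1,1\}^n$. Then the sum of the entries of row $i$ is $\pm(\mathbf{H}\mathbf{x})_i$, and the best row signs make every row sum nonnegative. Hence
$$\sigma([\mathbf{H}])=\max_{\mathbf{x}\in\{-1,1\}^n}\|\mathbf{H}\mathbf{x}\|_1 ,$$
where the right-hand side is $\|\mathbf{H}\|_{\infty,1}$ by~(\ref{NormInfty1}). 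Since a maximum is at least any average, it suffices to exhibit, for each odd $m$ with $\frac n4\le m\le\frac{3n}4$, a probability distribution on sign vectors with $\mathbb{E}\|\mathbf{H}'\mathbf{x}\|_1=Q_2(n,m)$ for some $\mathbf{H}'\in[\mathbf{H}]$.

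\emph{Normalization and choice of distribution.} First normalize $\mathbf{H}$ inside its class so that its first row is all ones. Next, permute columns so that the second row is $+1$ on the first $n/2$ columns and $-1$ on the rest. By orthogonality, the third row then splits each half equally. After a further column permutation the columns form four blocks $A,B,C,D$ of size $n/4$, on which rows 2 and 3 take the sign patterns $(+,+),(+,-),(-,+),(-,-)$.

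Choose $\mathbf{x}$ uniformly among the vectors with the following structure: exactly $n/4$ entries equal to $-1$ in one half, and exactly $m-\frac n4$ entries equal to $-1$ in the other half, all other entries $+1$, and $m$ entries equal to $-1$ in total. This matches the denominator $\binom{n/2}{n/4}\binom{n/2}{m-n/4}$. With this choice the contributions of the first two (or first few) normalized rows are deterministic, and together they should give the term $|2n-4m|$. Some trial will be needed to decide exactly which half receives the $n/4$ minus signs so that these rows contribute precisely $|2n-4m|$.

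\emph{The remaining rows.} Each of the other $n-2$ rows meets each half in $n/4$ entries $+1$ and $n/4$ entries $-1$. Write $j$ for the number of minus signs of $\mathbf{x}$ falling in the relevant part of the half with $m-\frac n4$ minus signs. Then $(\mathbf{H}\mathbf{x})_i$ is an affine function of two hypergeometric counts. The expectation $\mathbb{E}|(\mathbf{H}\mathbf{x})_i|$ is the same for all such rows and reduces to a sum over $j$. The two sums in $P_2$ should correspond to whether the relevant sub-block count is $\frac n4$ or $\frac n4-1$, which arises from the row structure relative to the normalized column. The prefactor $n(n-2)$ then comes from $(n-2)$ rows times a factor $n$ from the absolute-value identity
$$\sum_k |a-k|\binom{\cdot}{k}\binom{\cdot}{\cdot}=\text{a product of squared binomials},$$
a Vandermonde-type telescoping that is used in the same way for $Q_1$ and $P_1$.

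\emph{Main obstacle.} The main difficulty is this last step: evaluating $\sum\mathbb{E}|(\mathbf{H}\mathbf{x})_i|$ in the closed binomial form $P_2$. The first thing I would try is the standard identity for the mean absolute deviation of a hypergeometric variable, which is where the squared binomial coefficients $\binom{\frac n4-1}{\frac{m-1}2-j}^2$ come from. I would apply it conditionally on $j$ and then sum over $j$ against $\binom{n/4}{j}\binom{n/4}{m-\frac n4-j}$. Oddness of $m$ is what makes $\frac{m-1}{2}$ an integer and the median term vanish, so it should be used exactly at this point.
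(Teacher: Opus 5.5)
The paper gives no proof of this statement; it only quotes Enomoto and Miyamoto. So I am judging your argument on its own merits. Your skeleton is correct, but the step that carries all the content is missing, and the method you propose for it would fail.

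\textbf{What is right.} You correctly reduce to $\sigma([\mathbf{H}])=\max_{\mathbf{x}}\|\mathbf{H}\mathbf{x}\|_1$. Your distribution is also the right one: uniform on vectors with $\frac{n}{4}$ minus signs in one half and $q=m-\frac{n}{4}$ in the other, where the halves come from the normalized second row. Its expectation is exactly $Q_2(n,m)$. Two simplifications:
\begin{itemize}
\item No trial is needed for rows 1 and 2. Whichever half gets $\frac{n}{4}$ minus signs, they contribute $|n-2m|+|n-2m|=|2n-4m|$.
\item The blocks $A,B,C,D$ from the third row play no role.
\end{itemize}

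\textbf{The gap.} Write $N=\frac{n}{4}$. For every $i\ge 3$, $(\mathbf{H}\mathbf{x})_i=2(m-2k_1-2k_2)$. Here $k_1$ (resp.\ $k_2$) is the number of minus signs of $\mathbf{x}$ on the $+1$ positions of row $i$ in the half carrying $N$ (resp.\ $q$) minus signs. These counts are independent, with weights $\binom{N}{k_1}^2$ and $\binom{N}{k_2}\binom{N}{q-k_2}$. So the theorem comes down to the identity
\[
\sum_{k_1,k_2}|m-2k_1-2k_2|\binom{N}{k_1}^2\binom{N}{k_2}\binom{N}{q-k_2}=2N\,P_2(n,m),
\]
and your proposal leaves exactly this open.

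Your suggested route does not work as described. Conditioning on $j=k_2$ leaves $|(m-2j)-2k_1|$. This is the absolute deviation of $2k_1$ from $m-2j$, not from its mean $N$. Off-center absolute deviations of a hypergeometric variable involve partial sums $\sum_{k\le s}\binom{N}{k}^2$, which have no closed form. The mean-absolute-deviation identity therefore yields only part of the answer. This is also where the second sum of $P_2$ really comes from, not from a sub-block of size $\frac{n}{4}-1$ tied to a normalized column.

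\textbf{The missing idea: telescope in both variables.}
\begin{itemize}
\item Since $m$ is odd, $m-2k_1-2k_2\neq 0$.
\item The signed sum over all $(k_1,k_2)$ vanishes, because both weights are symmetric.
\item Hence the absolute sum is twice the signed sum over $k_1+k_2\le t:=\frac{m-1}{2}$.
\end{itemize}
Split $m-2k_1-2k_2=(N-2k_1)+(q-2k_2)$ and use
\[
\sum_{k\le s}(c-2k)\binom{A}{k}\binom{A}{c-k}=A\binom{A-1}{s}\binom{A-1}{c-s-1}.
\]
This holds because $(c-2k)\binom{A}{k}\binom{A}{c-k}=A(T_k-T_{k-1})$ with $T_k=\binom{A-1}{k}\binom{A-1}{c-k-1}$.
\begin{itemize}
\item Applied in $k_1$ to the first piece (with $A=c=N$, $s=t-k_2$), it gives $N\sum_j\binom{N}{j}\binom{N}{q-j}\binom{N-1}{t-j}^2$.
\item Applied in $k_2$ to the second piece (with $A=N$, $c=q$, $s=t-k_1$), then substituting $j=t-k_1$, it gives $N\sum_j\binom{N}{t-j}^2\binom{N-1}{j}\binom{N-1}{q-j-1}$.
\end{itemize}
These are exactly the two sums in $P_2$. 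So the double sum equals $2NP_2=\frac{n}{2}P_2$, and the expectation is
\[
|2n-4m|+\frac{(n-2)\,n\,P_2(n,m)}{\binom{n/2}{n/4}\binom{n/2}{q}}=Q_2(n,m).
\]
Your conditional approach can be rescued, but only with an extra summation by parts in $j$, which is this second telescoping.
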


Since $\sigma(\mathbf{H}_1 \otimes \mathbf{H}_2) = \sigma(\mathbf{H}_1) \sigma(\mathbf{H}_2)$
and $[\mathbf{H}_1 \otimes \mathbf{H}_2] \supseteq [\mathbf{H}_1] \otimes [\mathbf{H}_2]$,
it follows that
$\sigma([\mathbf{H}_1 \otimes \mathbf{H}_2]) \geq \sigma([\mathbf{H}_1])  \sigma([\mathbf{H}_2])$. This fact is a generalization of Example~\ref{Ex3}.

The next theorem establishes the connection between $\sigma([\mathbf{H}])$ and $\|\mathbf{H}\|_{\infty, 1}$.

\begin{theorem}\label{Theorem3}
  For any $\mathbf{H} \in \mathcal{H}_n$, the following equality holds:
  $$
    \sigma([\mathbf{H}]) = \|\mathbf{H}\|_{\infty, 1}.
  $$\end{theorem}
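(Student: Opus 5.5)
The plan is to prove the two inequalities $\sigma([\mathbf{H}]) \leq \|\mathbf{H}\|_{\infty,1}$ and $\|\mathbf{H}\|_{\infty,1} \leq \sigma([\mathbf{H}])$ separately. Both will rest on Rohn's formula~(\ref{NormInfty1}), $\|\mathbf{H}\|_{\infty,1} = \max\{\|\mathbf{H}\mathbf{x}\|_1 : \mathbf{x}\in\{-1,1\}^n\}$, and on the elementary identity $\|\mathbf{v}\|_1 = \max\{\mathbf{s}^T\mathbf{v} : \mathbf{s}\in\{-1,1\}^n\}$. Combining these gives the bilinear description
$$
\|\mathbf{H}\|_{\infty,1} = \max\{\mathbf{s}^T\mathbf{H}\mathbf{x} : \mathbf{s},\mathbf{x}\in\{-1,1\}^n\}.
$$
The key observation is that $\mathbf{s}^T\mathbf{H}\mathbf{x} = \sigma(\mathbf{D}_s\mathbf{H}\mathbf{D}_x)$, where $\mathbf{D}_s = \operatorname{diag}(\mathbf{s})$ and $\mathbf{D}_x = \operatorname{diag}(\mathbf{x})$. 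Indeed, the $(i,j)$ entry of $\mathbf{D}_s\mathbf{H}\mathbf{D}_x$ is $s_i h_{ij} x_j$, and the sum of all these entries is exactly $\mathbf{s}^T\mathbf{H}\mathbf{x}$. Moreover, $\mathbf{D}_s\mathbf{H}\mathbf{D}_x$ is obtained from $\mathbf{H}$ by negating rows and columns, so it lies in $[\mathbf{H}]$.

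For the inequality $\|\mathbf{H}\|_{\infty,1} \leq \sigma([\mathbf{H}])$, the maximum in the bilinear description is attained at some pair $(\mathbf{s},\mathbf{x})$. By the observation above, this maximum equals $\sigma(\mathbf{D}_s\mathbf{H}\mathbf{D}_x) \leq \sigma([\mathbf{H}])$.

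For the reverse inequality, let $\mathbf{H}' \in [\mathbf{H}]$ attain the maximal excess. By the definition of equivalence we can write $\mathbf{H}' = \mathbf{D}_1\mathbf{P}\mathbf{H}\mathbf{Q}\mathbf{D}_2$, where $\mathbf{P},\mathbf{Q}$ are permutation matrices and $\mathbf{D}_1,\mathbf{D}_2$ are diagonal $\pm1$ matrices. (Any sequence of row/column permutations and negations can be collected into this form, since commuting a permutation past a diagonal sign matrix only permutes the diagonal signs.) Then
$$
\sigma(\mathbf{H}') = \mathbf{1}^T\mathbf{D}_1\mathbf{P}\mathbf{H}\mathbf{Q}\mathbf{D}_2\mathbf{1} = \mathbf{s}^T\mathbf{H}\mathbf{x},
$$
where $\mathbf{s} = \mathbf{P}^T\mathbf{D}_1\mathbf{1}$ and $\mathbf{x} = \mathbf{Q}\mathbf{D}_2\mathbf{1}$. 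Both vectors lie in $\{-1,1\}^n$, because permutations preserve that set. Hence $\sigma(\mathbf{H}') \leq \|\mathbf{H}\|_{\infty,1}$.

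The only step needing care is the normal-form claim for equivalent matrices, and I expect it to be routine. It is also worth noting that the argument never uses orthogonality, so the identity in fact holds for any $\pm1$ matrix, with equivalence defined in the same way.
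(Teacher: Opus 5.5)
Your proof is correct and follows essentially the same route as the paper. Both write the class as $\{\mathbf{P}_1\mathbf{D}_1\mathbf{H}\mathbf{D}_2\mathbf{P}_2\}$, reduce $\sigma([\mathbf{H}])$ to $\max_{\mathbf{s},\mathbf{x}\in\{-1,1\}^n}\mathbf{s}^T\mathbf{H}\mathbf{x}$, and choose the row signs to get $\max_{\mathbf{x}}\|\mathbf{H}\mathbf{x}\|_1$, which matches Rohn's formula~(\ref{NormInfty1}). The differences are cosmetic: you absorb the permutations into the sign vectors instead of invoking permutation-invariance of the excess, and you prove two inequalities rather than writing one chain of equalities.
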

\begin{proof}
If two Hadamard matrices $\mathbf{H}_1$ and $\mathbf{H}_2$ are equivalent, then $\mathbf{H}_1 = \mathbf{Q}_1^{-1} \mathbf{H}_1 \mathbf{Q}_2,$ where $\mathbf{Q}_1$ and $\mathbf{Q}_2$ are monomial matrices (having only one nonzero element in each row or column) with nonzero entries $\pm1$.
 Monomial matrices can be presented as a product of a diagonal matrix and a permutation matrix: $\mathbf{Q} = \mathbf{D} \mathbf{P}$.
Therefore, 
\begin{multline*}
[\mathbf{H}] = \{\mathbf{P}_1 \mathbf{D}_1 \mathbf{H} \mathbf{D}_2 \mathbf{P}_2 \,:\, \mathbf{P}_1, \mathbf{P}_2 \in \mathrm{Perm}(n), \\ \mathbf{D}_1, \mathbf{D}_2 \in \mathrm{Diag}(n, \{-1, 1\})\}.
\end{multline*}

Since permutations of rows and columns of a matrix $\mathbf{H}$ do not change $\sigma(\mathbf{H})$, it follows that
$$\sigma([\mathbf{H}]) = \sigma( \{\mathbf{D}_1 \mathbf{H} \mathbf{D}_2 \,:\, \mathbf{D}_1, \mathbf{D}_2 \in \mathrm{Diag}(n, \{-1, 1\})\}).$$

Note that 
\begin{multline*}
\sigma(\mathbf{D}_1 \mathbf{H} \mathbf{D}_2) = \sum_{i = 1}^n\sum_{j = 1}^n  {d}_{1ii} {h}_{ij} {d}_{2jj}  \\= \sum_{i = 1}^n {d}_{1ii}  \sum_{j = 1}^n  {h}_{ij} {d}_{2jj}.
\end{multline*}

Let $s(x) = 
\left\{
\begin{array}{ll}
1 & \text{ if } x \geq 0, \\
-1 &  \text{ if } x < 0,
\end{array}
\right.$, $y_i = {d}_{1ii}$, and $x_j = {d}_{2jj}$.
We have
\begin{multline*}
\sigma([\mathbf{H}]) 
= 
\max_{x_i, y_j \in \{-1, 1\}} \sum_{i = 1}^n y_{i}  \sum_{j = 1}^n h_{ij} x_{j}  
\\=
\max_{x_i \in \{-1, 1\}}\sum_{i = 1}^n s\!\left(\sum_{j = 1}^n h_{ij} x_{j}\right)  \sum_{j = 1}^n h_{ij} x_{j} 
 \\=
\max_{x_i \in \{-1, 1\}} \sum_{i = 1}^n \left|\sum_{j = 1}^n  h_{ij} x_{j}\right|\!.
\end{multline*}

On the other hand,
from Formula~(\ref{NormInfty1}), we have
\begin{multline*}
\|\mathbf{H}\|_{\infty, 1} = \max\{\|\mathbf{H}\mathbf{x}\|_1 : \mathbf{x} \in \{-1, 1\}^n\}  \\=
\max\{|\mathbf{h}_{1}\mathbf{x}| + |\mathbf{h}_{2}\mathbf{x}| + \ldots + |\mathbf{h}_{n}\mathbf{x}|: \mathbf{x} \in \{-1, 1\}^n\}  \\=
\max_{\mathbf{x} \in \{-1, 1\}^n } \sum_{i = 1}^n \left|\sum_{j = 1}^n  h_{ij} x_{j}\right|\!.
\end{multline*}
As we can see, we obtain the same expression.
\end{proof}

\section{Conclusion}
In this paper, we obtained the upper bounds on the error and the largest absolute value.
Our formulae are helpful for analyzing the impact of the quantization error and allow us to find a bound on the number of bits we need to store results.

Our methods can also be used for other quantization and inverse quantization functions and for other pipelines of the form $\mathbf{x}' = \mathbf{L}_2 (\mathbf{N} (\mathbf{L}_1 (\mathbf{x})))$, where $\mathbf{L}_1$ and $\mathbf{L}_2$ are linear transformations and $\mathbf{N}$ is a nonlinear transformation.

In addition, we demonstrated the connection between the norm $\|\mathbf{H}\|_{\infty, 1}$ of a Hadamard matrix $\mathbf{H}$ and the maximal excess $\sigma([\mathbf{H}])$ of the equivalence class containing $\mathbf{H}$. 

It is known that computing the norm $\|\mathbf{A}\|_{\infty, 1}$ for a matrix is $\mathsf{NP}$-hard~\cite{Rohn2000}. Also, fast methods for computing the excess of a Hadamard matrix are still unknown. Hence, it is natural to ask the following question about Hadamard matrices:
\begin{question}
Is computing the norm $\|\mathbf{H}\|_{\infty, 1}$ for Hadamard matrices $\mathsf{NP}$-hard?
\end{question}

Matrices constructed by Sylvester's construction is a very special class,
so maybe computing this norm for this class is easier. For this class, we would like to ask the following question: 
\begin{question}
Find a formula to calculate $\|\mathbf{H}_{2^k}\|_{\infty, 1}$.
\end{question}

{\small
\bibliographystyle{amsplain}
\bibliography{report-001}
}
\end{document}